\newtheorem{theorem}{Theorem}
\newtheorem{corollary}[theorem]{Corollary}
\newtheorem{lemma}[theorem]{Lemma}
\newtheorem{proposition}[theorem]{Proposition}
\begin{document}

\title{Homogeneity degree of some symmetric products}
\author[Hern\'andez-Guti\'errez]{Rodrigo Hern\'{a}ndez-Guti\'{e}rrez}
\author[Mart\'inez-de-la-Vega]{Ver\'{o}nica Mart\'{\i}nez-de-la-Vega}

\address[Hern\'andez-Guti\'errez]{Department of Mathematics and Statistics, University of North Carolina at Charlotte, Charlotte NC 28223.}
\email[Hern\'andez-Guti\'errez]{rodrigo.hdz@gmail.com}

\address[Mart\'inez-de-la-Vega]{Universidad Nacional Aut\'{o}noma de M\'{e}xico, Instituto de Matem\'{a}ticas, Circuito Exterior, Cd. Universitaria, M\'{e}xico, 04510, M\'{e}xico.}
\email[Mart\'inez-de-la-Vega]{vmvm@matem.unam.mx}

\date{\today}

\subjclass[2010]{Primary, 54F15. Secondary, 54B20.}
\keywords{Arc; Continuum; Homogeneity Degree; Hyperspace; Manifold; Simple Closed Curve; Symmetric Product.}

\begin{abstract}
For a metric continuum $X$, we consider the $n^{\tiny\textrm{th}}$-symmetric
product $F_{n}(X)$ defined as the hyperspace of all nonempty subsets of $X$
with at most $n$ points. The homogeneity degree $hd(X)$
of a continuum $X$ is the number of orbits for the action of the group of
homeomorphisms of $X$ onto itself. In this paper we determine $hd(F_{n}(X))$
for every manifold without boundary $X$ and $n\in \mathbb{N}$. We also compute $%
hd(F_{n}[0,1])$ for all $n\in \mathbb{N}$.
\end{abstract}

\maketitle

\section*{Introduction}

A \textit{continuum} is a nonempty compact connected metric space.

Here, the word \textit{manifold} refers to a compact connected manifold with
or without boundary.

Given a continuum $X$, the $n^{\tiny\textrm{th}}$\textit{-symmetric product} of $X$
is the hyperspace
$$
F_{n}(X)=\{A\subset X:A\textrm{ is nonempty and $A$ has at most $n$ points}\}.
$$
The hyperspace $F_{n}(X)$ is considered with the Vietoris topology.

Given a continuum $X$, let $\mathcal{H}(X)$ denote the group of
homeomorphisms of $X$ onto itself. An \textit{orbit} in $X$ is a class of
the equivalence relation in $X$ given by $p$ is equivalent to $q$ if there
exists $h\in \mathcal{H}(X)$ such that $h(p)=q$.

The \textit{homogeneity degree}, $hd(X)$ of the continuum $X$ is
defined as

$$
hd(X)=\textrm{ number of orbits in }X.
$$

When $hd(X)=n$, the continuum $X$ is known to be $\frac{1}{n}$-homogeneous,
and when $hd(X)=1$, $X$ is \textit{homogeneous}.

In \cite{10}, P. Pellicer-Covarrubias studied continua $X$ for which $hd(F_{2}(X))=2$%
. Recently, I. Calder\'{o}n, R. Hern\'{a}ndez-Guti\'{e}rrez and A. Illanes
\cite{2} proved that if $P$ is the pseudo-arc, then $hd(F_{2}(P))=3$. Other papers
on homogeneity degrees of hyperspaces are \cite{4} and \cite{9}.

In this paper we determine $hd(F_{n}(X))$ for every manifold without
boundary $X$ and $n\in \mathbb{N}$. We also compute $hd(F_{n}[0,1])$ for all $%
n\in \mathbb{N}$. Since $F_{1}(X)$ is homeomorphic to $X$, $%
hd(F_{1}(X))=hd(X)$. Thus, $hd(F_{1}(X))=1$ for every manifold without
boundary $X$ and $hd(F_{1}([0,1]))=2$.

\section*{Manifolds without boundary}

We denote by $S^{1}$ the unit circle in the plane.

Given a continuum $X$, $n\in \mathbb{N}$ and subsets $U_{1},\ldots ,U_{m}$
of $X$, let

\begin{center}
$\left\langle U_{1},\ldots ,U_{m}\right\rangle _{n}=\{A\in F_{n}(X):A\subset
U_{1}\cup \ldots \cup U_{m}$ and $A\cap U_{i}\neq \emptyset $ for each $i\in
\{1,\ldots ,m\}\}$.
\end{center}

Then the family $\{\left\langle U_{1},\ldots ,U_{m}\right\rangle _{n}\subset
F_{n}(X):m\leq n$ and $U_{1},\ldots ,U_{m}$ are open in $X\}$ is a basis for
the Vietoris topology in $F_{n}(X)$ \cite{7}. If $A$ is any set and $n\in \mathbb{%
N}$, let $[A]^{n}=\{B\subset A:\left\vert B\right\vert =n\}$. Let $Y$ be a
topological space. A subset $Z\subset Y$ is a \textit{topological type}, or
just a \textit{type}, if for each $h\in \mathcal{H}(Y)$, $h(Z)=Z$.

\begin{lemma}\label{lema1}
Let $X$ be a locally connected continuum such that $%
[X]^{4} $ is a type in $F_{4}(X)$. Then for each $h\in \mathcal{H}(F_{4}(X))$, $%
h([X]^{2})\cap \lbrack X]^{3}=\emptyset $. 
\end{lemma}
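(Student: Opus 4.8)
The plan is as follows. Since $[X]^4$ is a type in $F_4(X)$, every $h\in\mathcal{H}(F_4(X))$ satisfies $h([X]^4)=[X]^4$, and hence also $h(F_3(X))=F_3(X)$, because $F_3(X)=F_4(X)\setminus[X]^4$. The obstacle is that we are \emph{not} given that $[X]^3$ is a type in $F_3(X)$, so we cannot simply restrict $h$ to $F_3(X)$ and invoke such a statement; we must argue inside $F_4(X)$. The idea is to isolate a topological property of points of $F_4(X)$, expressed using only the space $F_4(X)$ together with the $h$-invariant subsets $[X]^4$ and $F_3(X)$, which holds at every point of $[X]^3$ but at no point of $[X]^2$. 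Such a property is automatically preserved by $h$, so for $p\in[X]^2$ the point $h(p)$ fails it, and since $h(p)\in F_3(X)$ this forces $h(p)\notin[X]^3$, which is exactly the conclusion. We may assume $X$ is nondegenerate, the statement being vacuous otherwise.

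The property I would use: call $p\in F_3(X)$ \emph{spread} if $p$ has arbitrarily small connected open neighborhoods $N$ in $F_4(X)$ for which $N\cap F_3(X)$ is contained in the closure in $F_4(X)$ of \emph{every} connected component of $N\cap[X]^4$. To see that every $p=\{c,d,e\}\in[X]^3$ is spread, use the neighborhood basis at $p$ given by the sets $N=\langle W,Y,Z\rangle_4$ with $W,Y,Z$ pairwise disjoint connected open sets containing $c,d,e$ respectively (a basis by local connectedness of $X$). For such $N$, any member of $N\cap F_3(X)$ meets the three disjoint sets $W,Y,Z$ and so has exactly three points, one in each; thus $N\cap F_3(X)$ is homeomorphic to $W\times Y\times Z$. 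Moreover $N\cap[X]^4$ is a disjoint union of three relatively clopen pieces, distinguished by which of $W,Y,Z$ contains two of the four points, each piece being homeomorphic to $[W]^2\times Y\times Z$ up to permuting the roles of $W,Y,Z$. Using the standard facts that for a nondegenerate connected open set $W$ the set $[W]^2$ is connected and dense in $F_2(W)$, each piece is connected and its closure contains all of $W\times Y\times Z=N\cap F_3(X)$. Hence $p$ is spread.

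The main work, and the step I expect to be the crux, is to show that no $p=\{a,b\}\in[X]^2$ is spread. Fix disjoint connected open sets $U\ni a$ and $V\ni b$ and set $O=\langle U,V\rangle_4$, a neighborhood of $p$; it suffices to prove that every connected open $N$ with $p\in N\subseteq O$ fails the condition in the definition of spread. Let $E=\{B\in O:\,|B|=4,\ |B\cap U|=1\}$. One checks that $E$ is relatively clopen in $O\cap[X]^4$ and that $p\in\overline{E}$ (approximate $p$ by sets $\{a,b_1,b_2,b_3\}$ with distinct $b_i\to b$ in $V$); hence $N$ meets $E$, and since $N\cap E$ is nonempty and relatively clopen in $N\cap[X]^4$, some connected component $C$ of $N\cap[X]^4$ lies inside $E$. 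On the other hand, choosing $a_1\ne a_2$ in $U$ close enough to $a$, the three-point set $q=\{a_1,a_2,b\}$ lies in $N\cap F_3(X)$. But $q\notin\overline{C}$: a set close enough to $q$ must meet two fixed disjoint open subsets of $U$ (small neighborhoods of $a_1$ and $a_2$) and therefore has at least two points in $U$, whereas every member of $C\subseteq E$ has exactly one point in $U$; so no net in $C$ can converge to $q$. Thus $N\cap F_3(X)\not\subseteq\overline{C}$, so $N$ fails the condition and $p$ is not spread.

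The decisive ingredient is the choice of the invariant notion of being spread; beyond that, the argument uses only elementary properties of the Vietoris topology and the fact that $[W]^2$ is connected for connected open $W$. The key elementary observation in the hard direction is that the number of points of a set lying in a \emph{fixed} open set cannot drop upon passing to a limit, which is precisely what keeps the component $C$ (consisting of $4$-point sets with exactly one point in $U$) away from the $3$-point set $q$ (which has two points in $U$).
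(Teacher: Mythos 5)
Your argument is correct and is essentially the paper's proof in different packaging: your two key facts --- that near a three\nobreakdash-point set every component of the trace of $[X]^4$ on a small basic neighborhood accumulates at every nearby three-point set, while near a two-point set $\{a,b\}$ the clopen piece $E$ of four-point sets with exactly one point in $U$ fails to accumulate at $q=\{a_1,a_2,b\}$ --- are precisely the paper's properties (b) and (a) for its sets $\mathcal{V}_i$ and $\mathcal{U}_1$. The only organizational difference is that you package this as an $h$-invariant local property (``spread'') applied directly, whereas the paper argues by contradiction and transports the components $\mathcal{U}_i$ onto the $\mathcal{V}_i$ through $h$; both versions lean on the same standard-but-unproved fact that $[W]^2$ is connected and dense in $F_2(W)$ for a nondegenerate connected open $W$ (the paper's property (U4)).
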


\begin{proof}
Suppose to the contrary that there exist $h\in \mathcal{H}%
(F_{4}(X)) $ and $A=\{a_{1},a_{2}\}\in \lbrack X]^{2}$ such that $%
h(A)=B=\{b_{1},b_{2},b_{3}\}\in \lbrack X]^{3}$.

Let $V_{1}$, $V_{2}$ and $V_{3}$ be pairwise disjoint open connected sets of 
$X$ such that $b_{i}\in V_{i}$ for each $i\in \{1,2,3\}$. Let $U_{1},U_{2}$
be disjoint open connected subsets of $X$ such that $a_{1}\in U_{1}$, $%
a_{2}\in U_{2}$ and $h(\left\langle U_{1},U_{2}\right\rangle )\subset
\left\langle V_{1},V_{2},V_{3}\right\rangle $.

The neighborhoods $\left\langle U_{1},U_{2}\right\rangle $ and $\left\langle
V_{1},V_{2},V_{3}\right\rangle $ have different topological structures. We
will use this to arrive to a contradiction.

Let $\mathcal{U}=\left\langle U_{1},U_{2}\right\rangle $ and $\mathcal{V}=h(%
\mathcal{U})$.

First, we will describe the components of $\mathcal{U}\cap \lbrack X]^{4}$.
For each $i\in \{1,2,3\}$, let

\begin{center}
$\mathcal{U}_{i}=\{A\in F_{4}(X):\left\vert A\cap U_{1}\right\vert =i$ and $%
\left\vert A\cap U_{2}\right\vert =4-i\}$.
\end{center}

Then it can be proved that the following properties hold.

\begin{itemize}
 \item[(U1)] $\mathcal{U}_{i}$ is a nonempty subset of $\mathcal{U}\cap \lbrack
X]^{4}$ for each $i\in \{1,2,3\}$,
 \item[(U2)] $\mathcal{U}\cap \lbrack X]^{4}=\mathcal{U}_{1}\cup \mathcal{U}_{2}\cup 
\mathcal{U}_{3}$,
 \item[(U3)] cl$_{F_{4}(X)}(\mathcal{U}_{i})\cap \mathcal{U}_{j}=\emptyset $ if $%
i,j\in \{1,2,3\}$ and $i\neq j$, and
\item[(U4)] $\mathcal{U}_{i}$ is arcwise connected for all $i\in \{1,2,3\}$.
\end{itemize}

Thus, it follows that $\mathcal{U}_{1}$, $\mathcal{U}_{2}$ and $\mathcal{U}%
_{3}$ are exactly the components of $\mathcal{U}\cap \lbrack X]^{4}$.

Now, for each $i\in \{1,2,3\}$, let

\begin{center}
$\mathcal{V}_{i}=\{A\in \mathcal{V}:\left\vert A\cap V%
_{i}\right\vert =2\}$.
\end{center}

The following properties hold.

\begin{itemize}
 \item[(V1)] $\mathcal{V}_{i}$ is a nonempty subset of $\mathcal{V}\cap \lbrack
X]^{4}$ for each $i\in \{1,2,3\}$, 
 \item[(V2)] $\mathcal{V}\cap \lbrack X]^{4}=\mathcal{V}_{1}\cup \mathcal{V}_{2}\cup 
\mathcal{V}_{3}$, and
 \item[(V3)] cl$_{F_{4}(X)}(\mathcal{V}_{i})\cap \mathcal{V}_{j}=\emptyset $ if $%
i,j\in \{1,2,3\}$ and $i\neq j$.
\end{itemize}

Only (V1) requires further explanation. By hypothesis, $h(\mathcal{U}\setminus [X]\sp{4})=\mathcal{V}\setminus [X]\sp{4}$ is non-empty. Choose any  $E\in\mathcal{V}\setminus [X]\sp{4}$, clearly $E\in [X]\sp3$. Now let $i\in\{1,2,3\}$. Since $\mathcal{V}$ is a neighborhood of $E$ in $F_4(X)$, there exists $p_i\in U\setminus E$ such that $E\cup\{p_i\}\in\mathcal{V}$. Then $E\cup\{p_i\}\in\mathcal{V}_i$.

From property (V2), we have $h(\mathcal{U}_{1}\cup \mathcal{U}_{2}\cup 
\mathcal{U}_{3})=\mathcal{V}_{1}\cup \mathcal{V}_{2}\cup \mathcal{V}_{3}$.
Given $i\in \{1,2,3\}$, since $h(\mathcal{U}_{i})$ is connected, there exists $k(i)\in \{1,2,3\}$ such that $h(\mathcal{U}_{i})\subset \mathcal{V}%
_{k(i)}$. Since $h(\mathcal{U}_{1}\cup \mathcal{U}_{2}\cup \mathcal{U}_{3})=%
\mathcal{V}_{1}\cup \mathcal{V}_{2}\cup \mathcal{V}_{3}$, the function $%
k:\{1,2,3\}\rightarrow \{1,2,3\}$, is surjective. Since $\mathcal{V}_{1}$, $%
\mathcal{V}_{2}$ and $\mathcal{V}_{3}$ play symmetric roles, we may assume
that $h(\mathcal{U}_{i})=\mathcal{V}_{i}$ for all $i\in \{1,2,3\}$. Since $%
\mathcal{V}_{1}$, $\mathcal{V}_{2}$ and $\mathcal{V}_{3}$ are pairwise
disjoint, we obtain that indeed, $h(\mathcal{U}_{i})=\mathcal{V}_{i}$ for
all $i\in \{1,2,3\}$. So in fact, $\mathcal{V}_{1}$, $\mathcal{V}_{2}$ and $%
\mathcal{V}_{3}$ are the components of $\mathcal{V}\cap \lbrack X]^{4}$.

Let $C\in \mathcal{U}$ be such that $\left\vert C\cap U_{1}\right\vert =2$
and $\left\vert C\cap U_{2}\right\vert =1$. Denote $D=h(C)$. Since $C\in 
\mathcal{U}-[X]^{4}$ and $[X]^{4}$ is invariant under $h$, we obtain that $%
D\in \mathcal{V}-[X]^{4}$. Then $D\in \left\langle
V_{1},V_{2},V_{3}\right\rangle $, so $\left\vert D\right\vert =3$. So note
that we have the following properties.

(a) There is a neighborhood $\mathcal{R}$ of $C$ such that $\mathcal{R}\cap 
\mathcal{U}_{1}=\emptyset $ and

(b) If $\mathcal{S}$ is a neighborhood of $D$, then $\mathcal{S}\cap 
\mathcal{V}_{i}\neq \emptyset $ for all $i\in \{1,2,3\}$.

Clearly, property (a) contradicts property (b) since $h(\mathcal{U}_{1})=%
\mathcal{V}_{1}$. This contradiction shows that such a homeomorphism $h$ does not
exist. 
\end{proof}

\begin{lemma}\label{lema2}
Let $X$ be an $m$-manifold (with or without boundary) and $1\leq k\leq n$. Suppose that either: (a) $m\geq 2$ and $n\geq 3$; or (b) $%
m=1 $ and $n\geq 4$. Then the set $[X]^{k}$ is a type in $F_{n}(X)$. 
\end{lemma}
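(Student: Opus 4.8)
The plan is to attach to every $A\in F_{n}(X)$ a topological invariant of the pair $(F_{n}(X),A)$ that recovers the cardinality $|A|$; since $[X]^{1},\ldots,[X]^{n}$ partition $F_{n}(X)$, this forces $h([X]^{k})=[X]^{k}$ for every $h\in\mathcal{H}(F_{n}(X))$. Concretely I would show, by a downward induction, that each $F_{k}(X)$ is a type in $F_{n}(X)$, and then $[X]^{k}=F_{k}(X)\setminus F_{k-1}(X)$ is a type too. Throughout I use the standard local picture: if $A=\{a_{1},\ldots,a_{k}\}$ with the $a_{i}$ distinct, the sets $\langle U_{1},\ldots,U_{k}\rangle_{n}$ with the $U_{i}$ pairwise disjoint, $a_{i}\in U_{i}$, and each $U_{i}$ homeomorphic to $\mathbb{R}^{m}$ (if $a_{i}\in\operatorname{int}X$) or to the closed half space $\mathbb{H}^{m}=\{x\in\mathbb{R}^{m}:x_{1}\geq 0\}$ (if $a_{i}\in\partial X$) form a neighborhood basis at $A$; and for pairwise disjoint connected open sets $U_{1},\ldots,U_{j}$ with $j\leq k$, the set $\langle U_{1},\ldots,U_{j}\rangle_{k}\cap[X]^{k}$ breaks, according to how the $k$ points are distributed among the $U_{i}$, into exactly $\binom{k-1}{j-1}$ components, one for each composition of $k$ into $j$ positive parts, each connected because the $U_{i}$ are connected, and no two joined inside $[X]^{k}$ because the $U_{i}$ are disjoint and open.

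Suppose first $m\geq 2$. I would prove by downward induction on $k$, from $k=n$ to $k=3$, that $F_{k}(X)$ is a type, using the criterion: $A\in[X]^{k}$ if and only if $A$ has arbitrarily small neighborhoods $\mathcal{U}$ in $F_{n}(X)$ for which $\mathcal{U}\cap F_{k}(X)$ is an $mk$-manifold with or without boundary. If $|A|=k$ then $\mathcal{U}\cap F_{k}(X)\cong U_{1}\times\cdots\times U_{k}$, a product of balls and half-balls, so the criterion holds. If $|A|=j<k$ then $\mathcal{U}\cap F_{k}(X)\cong\langle U_{1},\ldots,U_{j}\rangle_{k}$, which is arcwise connected, whose closed subset $\langle U_{1},\ldots,U_{j}\rangle_{k}\cap F_{k-1}(X)$ has covering dimension at most $m(k-1)\leq mk-2$ (this is exactly where $m\geq 2$ is used), and whose complement has $\binom{k-1}{j-1}$ components; for $2\leq j\leq k-1$ that number is at least $k-1\geq 2$, contradicting the classical fact that removing a closed set of codimension $\geq 2$ from a connected manifold leaves a connected set, while the case $j=1$ reduces to this because $\langle U\rangle_{k}\cong F_{k}(\mathbb{R}^{m})$ or $F_{k}(\mathbb{H}^{m})$ contains a genuine two-point set, near which the neighborhood has the already-excluded form $\langle V_{1},V_{2}\rangle_{k}$. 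Starting from the trivial type $F_{n}(X)$ this recovers $[X]^{n}$, hence $F_{n-1}(X)=F_{n}(X)\setminus[X]^{n}$, hence $[X]^{n-1}$, and so on down to $[X]^{3}$ and $F_{2}(X)$.

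It remains, for $m\geq 2$, to separate $[X]^{1}$ from $[X]^{2}$ inside the type $F_{2}(X)$, and this is the main obstacle, since the manifold criterion fails here: $F_{2}$ of a surface is itself an honest $2m$-manifold. I would instead use: $A\in[X]^{2}$ if and only if $A$ has arbitrarily small neighborhoods $\mathcal{U}$ in $F_{3}(X)$ — already known to be a type — with $\mathcal{U}\setminus F_{2}(X)$ disconnected. For a genuine pair, $\langle U_{1},U_{2}\rangle_{3}\setminus F_{2}(X)=\langle U_{1},U_{2}\rangle_{3}\cap[X]^{3}$ splits into the two pieces "the extra point lies in $U_{1}$" and "the extra point lies in $U_{2}$"; for a singleton, $\langle U\rangle_{3}\setminus F_{2}(X)=[U]^{3}$ is the space of three-point subsets of a ball or half-ball, which is connected because $m\geq 2$. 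Then $[X]^{1}=F_{2}(X)\setminus[X]^{2}$ is a type, completing the case $m\geq 2$.

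Finally, suppose $m=1$, so $X$ is an arc or a circle and $n\geq 4$; the scheme is the same but crude manifoldness is less decisive, so I would use finer invariants. Inspecting basic neighborhoods shows that a genuine $j$-point set with $2\leq j\leq n-1$ never has an $n$-manifold neighborhood — it has arbitrarily close $(n-1)$-point sets whose own neighborhood is homeomorphic to the product of a star with $n-1\geq 3$ edges and a Euclidean or half-space factor — so if $M$ denotes the type of points possessing an $n$-manifold, possibly-with-boundary, neighborhood, then $\operatorname{int}_{F_{n}(X)}M=[X]^{n}$ (no singleton is interior to $M$, since $F_{n}(U)$ always contains $j$-point sets with $2\leq j\leq n-1$). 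Hence $[X]^{n}$ and $F_{n-1}(X)=F_{n}(X)\setminus[X]^{n}$ are types; the strata $[X]^{2},\ldots,[X]^{n-1}$ lie in $F_{n-1}(X)$, and $[X]^{1}=F_{1}(X)$ is the leftover complement once all of them are identified. For $n\geq 5$ one identifies $[X]^{2},\ldots,[X]^{n-1}$ by induction on $n$ applied to the restriction of $h$ to $F_{n-1}(X)$; for the base case $n=4$ one separates $[X]^{2}$ from $[X]^{3}$ inside $F_{4}(X)$ directly, which I would do by computing the local homology of $F_{4}(X)$ at $A$ from the explicit local models — a two-dimensional singular space times $\mathbb{R}^{2}$ for a pair, a star times $\mathbb{R}^{3}$ for a triple — and checking that the resulting groups differ. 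This last, entirely hands-on computation is the principal obstacle in the one-dimensional case.
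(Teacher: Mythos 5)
Most of your outline is sound, and it essentially reconstructs from first principles what the paper obtains by citation: the characterization of $[X]^{n}$ as the elements admitting an $nm$-cell (or $nm$-manifold) neighborhood is quoted from Theorem 17 of the authors' rigidity paper for $m\geq 2$ and from Corollary 4.4 of Illanes--Casta\~neda for graphs, and the downward peeling $F_{n}\supset F_{n-1}\supset\cdots$ is exactly the paper's argument. Your component-counting proof that $\langle U_{1},\ldots,U_{j}\rangle_{k}$ is not an $mk$-manifold when $2\leq j\leq k-1$ (a closed subset of dimension $\leq mk-2$ cannot disconnect a connected $mk$-manifold, yet $[X]^{k}$ falls into $\binom{k-1}{j-1}\geq 2$ local pieces) is correct, as is the reduction of the case $j=1$ to $j=2$. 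Your separation of $[X]^{1}$ from $[X]^{2}$ for $m\geq 2$ via the local components of $\mathcal{U}\setminus F_{2}(X)$ inside $F_{3}(X)$ is a legitimate replacement for the rigidity theorem the paper invokes; just phrase the invariant as the inverse limit of $\pi_{0}(\mathcal{U}\setminus F_{2}(X))$ over a neighborhood basis rather than as ``arbitrarily small neighborhoods with disconnected complement,'' since a singleton does have arbitrarily small (non-basic) neighborhoods $\mathcal{U}$ with $\mathcal{U}\setminus F_{2}(X)$ disconnected.

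The genuine gap is exactly where you flag it: separating $[X]^{2}$ from $[X]^{3}$ inside $F_{4}(X)$ when $m=1$. This is the entire content of the paper's Lemma \ref{lema1}, and it is not a routine verification that you may defer. Moreover, the tool you propose is not guaranteed to work. At a triple the local model is $S_{3}\times\mathbb{R}^{3}$ ($S_{3}$ a triod), whose link is homotopy equivalent to $S^{3}\vee S^{3}$, so the local homology is $\mathbb{Z}^{2}$ concentrated in degree $4$; at a pair the local model is the union of the three four-dimensional pieces corresponding to the distributions $(1,3)$, $(2,2)$, $(3,1)$, glued along lower strata, and nothing you have written rules out that its local homology is again $\mathbb{Z}^{2}$ in degree $4$ --- a homological invariant can simply fail to see a difference that is nonetheless there. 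The paper's Lemma \ref{lema1} gets around this with a finer, non-homological observation: near a triple, \emph{every} neighborhood meets all three local components of $[X]^{4}$ (property (b) in that proof, witnessed by the point $D$ of cardinality $3$), whereas near a suitable element $C$ of $\mathcal{U}\setminus[X]^{4}$ with distribution $(2,1)$ one of the three components $\mathcal{U}_{1}$ is avoided by a whole neighborhood (property (a)). To close your argument you must either actually carry out the homology computation at the pair and verify the groups differ, or replace it by an argument of this closure-theoretic kind; as it stands, the base case $n=4$, $m=1$ --- and with it clause (b) of the lemma --- is unproven.
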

\begin{proof}
Let $h\in \mathcal{H}(F_{n}(X))$.

Define $\mathcal{D}_{n}(X)=\{A\in F_{n}(X):A$ has a neighborhood in $%
F_{n}(X) $ that is an $nm$-cell$\}$.

Since the definition of $\mathcal{D}_{n}(X)$ is given in terms of
topological concepts, we have $h(\mathcal{D}_{n}(X))=\mathcal{D}_{n}(X)$.

In the case that $m\geq 2$ and $n\geq 3$, Theorem 17 in \cite{3} implies that $%
h(F_{1}(X))=F_{1}(X)=[X]^{1}$. Moreover, by the first part of the proof of
Theorem 17 in \cite{3}, we have $\mathcal{D}_{n}(X)=[X]^{n}$. Thus, $%
h([X]^{n})=[X]^{n}$. This implies that $h(F_{n-1}(X))=F_{n-1}(X)$. If $3\leq
n-1$ we can repeat the argument to obtain that $h([X]^{n-1})=[X]^{n-1}$.
Proceeding in this way, we have that for each $k\geq 3$, $h([X]^{k})=[X]^{k}$%
. Hence, $h(F_{2}(X))=F_{2}(X)$. Since $h(F_{1}(X))=F_{1}(X)$, we are done.

In the case that $m=1$ and $n\geq 4$, we have that $X$ is either an arc or a
simple closed curve.

By Corollary 4.4 in \cite{6}, $\mathcal{D}_{n}(X)=[X]^{n}$. Thus, $%
h([X]^{n})=[X]^{n}$. Proceeding as before, we obtain that for each $k\geq 4$%
, $h([X]^{k})=[X]^{k}$. Thus, $h(F_{3}(X))=F_{3}(X)$. Moreover, by Corollary
7 in \cite{3}, we have $h(F_{1}(X))=F_{1}(X)$. Finally, Lemma \ref{lema1} implies that $%
h([X]^{2})=[X]^{2}$ and $h([X]^{3})=[X]^{3}$. 
\end{proof}

The following lemma is well known.

\begin{lemma}\label{lema3}
Let $X$ be a manifold without boundary and $n\in \mathbb{N}
$. Then for every $A,B\in \lbrack X]^{n}$, there exists $h\in \mathcal{H}(X)$
such that $h(A)=B$. 
\end{lemma}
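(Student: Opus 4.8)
The plan is to prove the stronger statement that, when $\dim X=m\geq 2$, the group $\mathcal{H}(X)$ acts $n$-transitively on ordered $n$-tuples of pairwise distinct points of $X$; the assertion about $[X]^{n}$ then follows at once by choosing any labelling of the points of $A$ and of $B$. The one-dimensional case is $X=S^{1}$ (the only compact connected $1$-manifold without boundary), which is handled separately and by hand: $n$ points of $S^{1}$ cut it into $n$ arcs carrying a cyclic order, so given two such configurations one builds $h\in\mathcal{H}(S^{1})$ by sending the arcs of the first configuration onto the arcs of the second respecting the cyclic orders, and gluing homeomorphisms of the arcs.

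For $m\geq 2$ the proof goes by induction on $n$. The geometric engine is the classical ``disk trick'': if $U\subseteq X$ is open with $U$ homeomorphic to $\mathbb{R}^{m}$ and $p,q\in U$, then there is $h\in\mathcal{H}(X)$ with $h(p)=q$ and $h=\mathrm{id}$ on $X\setminus U$ --- choose a compact Euclidean ball $K\subseteq U$ containing $p$ and $q$ in its interior, take a compactly supported homeomorphism of $\mathbb{R}^{m}$ (a push along the segment from $p$ to $q$) that is the identity outside $\mathrm{int}\,K$, and extend by the identity. The base case $n=1$ is the homogeneity of a connected manifold: the relation ``$p\sim q$ iff some $h\in\mathcal{H}(X)$ sends $p$ to $q$'' is an equivalence relation whose classes are open by the disk trick, hence (as $X$ is connected) there is exactly one class.

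For the inductive step, assume the statement for $n-1$ and let $(a_{1},\dots,a_{n})$ and $(b_{1},\dots,b_{n})$ be $n$-tuples of pairwise distinct points. Applying the inductive hypothesis to $(a_{1},\dots,a_{n-1})$ and $(b_{1},\dots,b_{n-1})$ yields $h_{1}\in\mathcal{H}(X)$ with $h_{1}(a_{i})=b_{i}$ for $i<n$; put $a_{n}'=h_{1}(a_{n})$, a point distinct from $b_{1},\dots,b_{n-1}$. It suffices to produce $h_{2}\in\mathcal{H}(X)$ fixing $b_{1},\dots,b_{n-1}$ and carrying $a_{n}'$ to $b_{n}$, since then $h_{2}\circ h_{1}$ works. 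Work inside $Y=X\setminus\{b_{1},\dots,b_{n-1}\}$, which is an open, connected (this is where $m\geq 2$ enters), locally path-connected manifold without boundary, hence path-connected; choose a path in $Y$ from $a_{n}'$ to $b_{n}$ and cover its compact image by finitely many Euclidean coordinate balls contained in $Y$. Chaining the disk trick along these balls --- each localized homeomorphism being the identity outside its ball, hence automatically fixing $b_{1},\dots,b_{n-1}$ --- gives the required $h_{2}$.

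The only genuine subtlety is the appeal to $m\geq 2$ at exactly one point: the fact that deleting finitely many points from a connected manifold leaves it connected fails for $S^{1}$, which is why that case is dispatched directly. Everything else is routine, the main ingredient being the disk trick, whose verification reduces to exhibiting a compactly supported self-homeomorphism of $\mathbb{R}^{m}$ moving a prescribed point to another prescribed point --- a standard explicit construction. (Alternatively, one may package the induction as the assertion that the space of ordered $n$-tuples of pairwise distinct points of $X$ is connected when $m\geq 2$ and that each $\mathcal{H}(X)$-orbit in it is open, which again rests on the disk trick.)
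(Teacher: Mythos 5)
Your proof is correct. Note that the paper itself offers no argument for this lemma --- it is stated with the single remark that it is ``well known'' --- so there is nothing to compare against; the induction on $n$ via the compactly supported ``disk trick,'' together with the connectedness of a connected $m$-manifold minus finitely many points when $m\geq 2$ and the separate cyclic-order argument for $S^{1}$ (where ordered $n$-transitivity fails but the unordered statement still holds), is exactly the standard justification the authors are implicitly invoking.
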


\begin{corollary}\label{coro4}
Let $X$ be a manifold without boundary and $n\geq 2$. Then\\ $hd(F_{n}(X))\leq n$. 
\end{corollary}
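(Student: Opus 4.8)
The plan is to partition $F_n(X)$ by cardinality and to show that each piece of the partition is contained in a single orbit. Write
\[
F_n(X)=[X]^1\cup[X]^2\cup\cdots\cup[X]^n ,
\]
a partition of $F_n(X)$ into at most $n$ nonempty pieces. It suffices to prove that for each $k\in\{1,\ldots,n\}$ the set $[X]^k$ lies inside a single orbit of the action of $\mathcal{H}(F_n(X))$ on $F_n(X)$. Indeed, if $A\in[X]^k$ then the orbit of $A$ contains all of $[X]^k$, so every orbit is a union of some of the sets $[X]^1,\ldots,[X]^n$; hence there are at most $n$ orbits, i.e.\ $hd(F_n(X))\le n$.

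To see that $[X]^k$ is contained in one orbit, fix $A,B\in[X]^k$. By Lemma \ref{lema3} (applied with $k$ in place of $n$) there is $g\in\mathcal{H}(X)$ with $g(A)=B$. The induced map $F_n(g)\colon F_n(X)\to F_n(X)$, $C\mapsto g(C)$, is a bijection with inverse $F_n(g^{-1})$, and it is continuous because it carries basic Vietoris open sets to basic Vietoris open sets,
\[
F_n(g)\bigl(\langle U_1,\ldots,U_m\rangle_n\bigr)=\langle g(U_1),\ldots,g(U_m)\rangle_n .
\]
Thus $F_n(g)\in\mathcal{H}(F_n(X))$ and $F_n(g)(A)=g(A)=B$, so $A$ and $B$ belong to the same orbit. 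Since $A,B\in[X]^k$ were arbitrary, $[X]^k$ is contained in a single orbit, and the reduction above finishes the proof.

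There is no substantial obstacle here: the argument is just Lemma \ref{lema3} together with the functoriality of the symmetric product. The only step meriting the short verification displayed above is that a self-homeomorphism of $X$ induces a self-homeomorphism of $F_n(X)$ in the Vietoris topology. Note, incidentally, that the hypothesis $n\ge 2$ is not used for this inequality (it is presumably imposed only to pair with a matching lower bound established elsewhere), so the same reasoning yields $hd(F_n(X))\le n$ for every $n\in\mathbb{N}$.
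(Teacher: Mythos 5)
Your argument is correct and is exactly the one the paper intends: the corollary is stated without proof as an immediate consequence of Lemma \ref{lema3}, via the partition of $F_n(X)$ into the sets $[X]^1,\ldots,[X]^n$ and the observation that each $g\in\mathcal{H}(X)$ induces $F_n(g)\in\mathcal{H}(F_n(X))$. Your incidental remark that $n\ge 2$ is not needed for the inequality is also accurate.
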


Applying Lemmas 2 and 3, we conclude the following theorem.

\begin{proposition}\label{teo5}
Let $X$ be an $m$-manifold without boundary with $m\geq
2 $ and let $n\geq 3$. Then $hd(F_{n}(X))=n$. 
\end{proposition}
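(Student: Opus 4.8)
The plan is to assemble Proposition~\ref{teo5} from the three preceding lemmas together with Corollary~\ref{coro4}, which already supplies the upper bound $hd(F_n(X))\le n$. So the entire task reduces to establishing the matching lower bound $hd(F_n(X))\ge n$, i.e.\ that the action of $\mathcal{H}(F_n(X))$ on $F_n(X)$ has at least $n$ orbits.

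First I would note that $F_n(X)=[X]^1\cup[X]^2\cup\cdots\cup[X]^n$ is a partition of $F_n(X)$ into $n$ pieces, all nonempty because a manifold of dimension $m\ge 2$ is infinite. Since $m\ge 2$ and $n\ge 3$, hypothesis~(a) of Lemma~\ref{lema2} is satisfied, so for every $k$ with $1\le k\le n$ the set $[X]^k$ is a type in $F_n(X)$; that is, $h([X]^k)=[X]^k$ for each $h\in\mathcal{H}(F_n(X))$. Because these $n$ types partition $F_n(X)$, every orbit is contained in exactly one $[X]^k$, and hence there are at least $n$ orbits. Combined with Corollary~\ref{coro4} this already gives $hd(F_n(X))=n$.

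For completeness I would also identify the orbits explicitly: fix $k\in\{1,\dots,n\}$ and take $A,B\in[X]^k$. Lemma~\ref{lema3}, applied with $k$ in place of $n$, provides $g\in\mathcal{H}(X)$ with $g(A)=B$; the induced homeomorphism $F_n(g)\in\mathcal{H}(F_n(X))$ given by $C\mapsto g(C)$ then carries $A$ to $B$. Thus each $[X]^k$ is contained in a single orbit, and together with the previous paragraph we conclude that $[X]^1,\dots,[X]^n$ are precisely the $n$ orbits of $F_n(X)$, so $hd(F_n(X))=n$.

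There is no substantial obstacle here: all the difficulty was absorbed into Lemmas~\ref{lema1}--\ref{lema3}. The only points requiring care are that Lemma~\ref{lema2} is invoked strictly within the range where its hypothesis~(a) holds, and that Lemma~\ref{lema3} may legitimately be reused with the smaller index $k\le n$ since it is stated for every $n\in\mathbb{N}$.
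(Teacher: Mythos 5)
Your argument is correct and is exactly the one the paper intends: the paper derives Proposition~\ref{teo5} by "applying Lemmas \ref{lema2} and \ref{lema3}," i.e.\ each $[X]^k$ is a type by Lemma~\ref{lema2} (giving at least $n$ orbits) and a single orbit by Lemma~\ref{lema3} via the induced homeomorphism $F_n(g)$. Your write-up simply makes these steps explicit.
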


Given a continuum $X$ and $n\geq 2$, the hyperspace $F_{n}(X)$ is rigid
provided that for each $h\in \mathcal{H}(F_{n}(X))$, $h(F_{1}(X))=F_{1}(X)$.
Rigidity of symmetric products was studied in \cite{3}, where it was shown that
(\cite[Theorem 17]{3}) if $X$ is an $m$-manifold, $m\geq 2$ and $n\geq 3$, then $%
F_{n}(X)$ is rigid. Using a theorem by R. Molski \cite{8}, we show that this
result is also true for $m\geq 3$ and $n=2$.

\begin{lemma}\label{lema6}
Let $X$ be an $m$-manifold with $m\geq 3$. Then $F_{2}(X)$
is rigid. 
\end{lemma}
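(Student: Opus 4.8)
The plan is to distinguish $[X]^{2}$ from $F_{1}(X)$ inside $F_{2}(X)$ by a topological property, in the spirit of the set $\mathcal{D}_{n}(X)$ used in the proof of Lemma~\ref{lema2}. Let
$$\mathcal{D}_{2}(X)=\{A\in F_{2}(X):A\text{ has a neighborhood in }F_{2}(X)\text{ that is a }2m\text{-cell}\}.$$
Since this set is described in purely topological terms, $h(\mathcal{D}_{2}(X))=\mathcal{D}_{2}(X)$ for every $h\in\mathcal{H}(F_{2}(X))$. I claim $\mathcal{D}_{2}(X)=[X]^{2}$. Granting the claim, every $h\in\mathcal{H}(F_{2}(X))$ fixes $[X]^{2}$ setwise, and since $F_{2}(X)=F_{1}(X)\cup[X]^{2}$ with the two pieces disjoint, $h(F_{1}(X))=F_{1}(X)$; that is, $F_{2}(X)$ is rigid. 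Everything thus reduces to the two inclusions $[X]^{2}\subseteq\mathcal{D}_{2}(X)$ and $\mathcal{D}_{2}(X)\cap F_{1}(X)=\emptyset$.

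The first inclusion needs no hypothesis on $m$. Given $A=\{a_{1},a_{2}\}$ with $a_{1}\neq a_{2}$, choose open sets $B_{1}\ni a_{1}$ and $B_{2}\ni a_{2}$ with disjoint closures, each $\overline{B_{i}}$ an $m$-cell (possible whether or not $a_{i}\in\partial X$). Since $B_{1}\cap B_{2}=\emptyset$, the map sending $C$ to the pair of its traces on $B_{1}$ and $B_{2}$ identifies $\langle\overline{B_{1}},\overline{B_{2}}\rangle$ with $\overline{B_{1}}\times\overline{B_{2}}$, a $2m$-cell, and $\langle\overline{B_{1}},\overline{B_{2}}\rangle$ is a neighborhood of $A$ (it contains the open set $\langle B_{1},B_{2}\rangle\ni A$). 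Hence $A\in\mathcal{D}_{2}(X)$.

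The second inclusion is where $m\geq3$ and Molski's theorem enter. Fix $x\in X$ and suppose $\{x\}\in\mathcal{D}_{2}(X)$, with $N$ a $2m$-cell neighborhood; then $N$ is a $2m$-manifold with boundary, so $\{x\}$ has a neighborhood in $F_{2}(X)$ that is a $2m$-manifold with or without boundary. On the other hand, the sets $F_{2}(B)=\langle B\rangle$, with $B$ a small neighborhood of $x$ in $X$, form a neighborhood basis of $\{x\}$, and $B$ may be taken homeomorphic to $\mathbb{R}^{m}$ if $x\notin\partial X$ and to $\mathbb{H}^{m}=\mathbb{R}^{m-1}\times[0,\infty)$ if $x\in\partial X$, carrying $x$ to $0$. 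By Molski's theorem \cite{8} on symmetric products of Euclidean spaces, $F_{2}(\mathbb{R}^{m})\cong\mathbb{R}^{m}\times(\mathbb{R}^{m}/\{\pm1\})\cong\mathbb{R}^{m}\times C(\mathbb{RP}^{m-1})$, the open cone, with $\{x\}=\{0\}$ at $(0,v_{0})$ for $v_{0}$ the vertex; an analogous analysis of the $\mathbb{Z}_{2}$-quotient gives $F_{2}(\mathbb{H}^{m})\cong\mathbb{R}^{m-1}\times C(\mathbb{RP}^{m-1})\times[0,\infty)$ with $\{x\}$ at $(0,v_{0},0)$. Now for $m\geq3$ one has $\widetilde{H}_{1}(\mathbb{RP}^{m-1})\cong\mathbb{Z}/2\neq0$, so the local homology of $C(\mathbb{RP}^{m-1})$ at $v_{0}$ is nonzero in degree $2$; by the Künneth formula for relative homology of products, the local homology of $F_{2}(\mathbb{R}^{m})$ at $\{0\}$ (and of $F_{2}(\mathbb{H}^{m})$ at every point $(0,v_{0},t)$ with $t>0$) is nonzero in degree $m+2$, which lies strictly between $m$ and $2m$ --- impossible at a point of a $2m$-manifold with or without boundary. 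When $x\notin\partial X$ this contradicts $\{x\}$ having a $2m$-manifold neighborhood. When $x\in\partial X$ the local homology at $\{x\}$ itself happens to vanish, but every neighborhood of $\{x\}$ in $F_{2}(X)$ contains non-manifold points (the $(0,v_{0},t)$'s), while a $2m$-manifold with boundary contains only manifold-with-boundary points; again a contradiction. Hence $\{x\}\notin\mathcal{D}_{2}(X)$, which proves the claim and the lemma.

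The main obstacle is the third paragraph: extracting from Molski's theorem the precise local model of $F_{2}(X)$ at a singleton and verifying that it is genuinely non-manifold once $m\geq3$, together with the extra case analysis forced by singletons lying over $\partial X$, where the local homology at the point itself does not detect the singularity and one must instead argue that non-manifold points accumulate there. This is also exactly where the hypothesis $m\geq3$ is indispensable: for $m=2$ one has $\mathbb{RP}^{1}\cong S^{1}$, $C(\mathbb{RP}^{1})\cong\mathbb{R}^{2}$, and $F_{2}(\mathbb{R}^{2})\cong\mathbb{R}^{4}$ is a genuine $4$-manifold (Molski's classical $F_{2}(I^{2})\cong I^{4}$), so singletons do acquire $4$-cell neighborhoods and both the argument and the statement of the lemma break down.
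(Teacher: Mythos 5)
Your proof is correct, and while its overall skeleton coincides with the paper's (define a topologically invariant class $\mathcal{D}_{2}(X)$ of points of $F_{2}(X)$ singled out by a local property, show $\mathcal{D}_{2}(X)=[X]^{2}$, and conclude rigidity since $F_{1}(X)$ is the complement of $[X]^{2}$; the first inclusion $[X]^{2}\subseteq\mathcal{D}_{2}(X)$ via disjoint $m$-cells is identical), the decisive step is carried out by a genuinely different method. The paper defines $\mathcal{D}_{2}(X)$ by embeddability of a neighborhood in $\mathbb{R}^{2m}$ and excludes singletons by quoting Molski's Theorem~3 (non-embeddability of $F_{2}(I^{m})$ in $\mathbb{R}^{2m}$ for $m\geq3$) as a black box; you instead define $\mathcal{D}_{2}(X)$ by the (formally stronger) requirement of a $2m$-cell neighborhood and exclude singletons by exhibiting the local model $F_{2}(\mathbb{R}^{m})\cong\mathbb{R}^{m}\times C(\mathbb{RP}^{m-1})$ --- which, incidentally, is just the elementary midpoint/half-difference change of coordinates and does not really need to be attributed to Molski --- and computing that the local homology at a singleton is $\mathbb{Z}/2$ in degree $m+2$, which is impossible in a $2m$-manifold with or without boundary once $m\geq3$. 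Your treatment of singletons over $\partial X$, where the local homology at the point itself vanishes and one must instead observe that non-manifold points (singletons over nearby interior points) accumulate there, is a case the paper's embeddability formulation absorbs without comment but which your cell formulation genuinely requires, and you handle it correctly; the verification $F_{2}(\mathbb{H}^{m})\cong\mathbb{R}^{m-1}\times[0,\infty)\times C(\mathbb{RP}^{m-1})$ checks out. What your route buys is self-containedness (modulo standard algebraic topology) and a transparent explanation of why $m\geq3$ is indispensable ($C(\mathbb{RP}^{1})\cong\mathbb{R}^{2}$ kills the obstruction when $m=2$); what the paper's route buys is brevity and the slightly more robust invariant ``embeddable in $\mathbb{R}^{2m}$,'' for which a single citation suffices.
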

\begin{proof}
Let $\mathcal{D}_{2}(X)=\{A\in F_{2}(X):A$ has a
neighborhood in $F_{2}(X)$ embeddable in $\mathbb{R}^{2m}\}$. Clearly, each
element $A\in \lbrack X]^{2}$ has a neighborhood homeomorphic to $[0,1]^{2m}$, so $[X]^{2}\subset \mathcal{D}_{2}(X)$.

Given $p\in X$, let $\mathcal{M}$ be a neighborhood of $\{p\}$ in $F_{2}(X)$. Then there exists an open subset $U$ of $X$ such that $p\in U$ and $\left\langle U\right\rangle _{2}\subset \mathcal{M}$. Let $R$ be an $m$-cell
such that $R\subset U$. Then $F_{2}(R)\subset \left\langle U\right\rangle
_{2}\subset \mathcal{M}$. By Theorem 3 in \cite{8}, $F_{2}(R)$ (and then $%
\mathcal{M}$) cannot be embedded in $\mathbb{R}^{2m}$. We have shown that $%
\mathcal{D}_{2}(X)\subset \lbrack X]^{2}$. Therefore, $\mathcal{D}%
_{2}(X)=[X]^{2}$. Since the definition of $\mathcal{D}_{2}(X)$ is given in
topological terms, we conclude that $F_{2}(X)$ is rigid.  
\end{proof}\bigskip

Combining Corollary \ref{coro4} and Lemma \ref{lema6}, we obtain the following result.

\begin{proposition}\label{teo7}
Let $X$ be an $m$-manifold without boundary with $m\geq
3 $. Then $hd(F_{2}(X))=2$. 
\end{proposition}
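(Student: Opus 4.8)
The plan is to sandwich $hd(F_{2}(X))$ between $2$ and $2$. The upper bound $hd(F_{2}(X))\leq 2$ is immediate: it is exactly the content of Corollary \ref{coro4} with $n=2$, valid since $X$ is a manifold without boundary and $2\geq 2$. So the entire task reduces to establishing the lower bound $hd(F_{2}(X))\geq 2$, that is, to exhibiting at least two distinct orbits for the action of $\mathcal{H}(F_{2}(X))$ on $F_{2}(X)$.

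For the lower bound I would invoke Lemma \ref{lema6}: because $m\geq 3$, the hyperspace $F_{2}(X)$ is rigid, which by definition means that every $h\in\mathcal{H}(F_{2}(X))$ satisfies $h(F_{1}(X))=F_{1}(X)$. Hence $F_{1}(X)=[X]^{1}$ is a type in $F_{2}(X)$, and consequently its complement $[X]^{2}=F_{2}(X)\setminus F_{1}(X)$ is also a type. Both of these types are nonempty (the second because $X$, being a continuum, has more than one point). Now fix a point $p\in X$ and a two-point set $\{p,q\}\in[X]^{2}$. Since $[X]^{1}$ is invariant under all self-homeomorphisms, the orbit of $\{p\}$ is contained in $[X]^{1}$; likewise the orbit of $\{p,q\}$ is contained in $[X]^{2}$. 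As $[X]^{1}$ and $[X]^{2}$ are disjoint, the points $\{p\}$ and $\{p,q\}$ of $F_{2}(X)$ lie in different orbits, so $hd(F_{2}(X))\geq 2$.

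Combining the two inequalities yields $hd(F_{2}(X))=2$, as desired. Since this argument is just the assembly of Corollary \ref{coro4} and Lemma \ref{lema6}, there is no genuine obstacle at this stage; the real work has already been absorbed into Lemma \ref{lema6}, whose proof rests on Molski's theorem that $F_{2}(R)$ fails to embed in $\mathbb{R}^{2m}$ for an $m$-cell $R$. The one small point worth stating explicitly in the write-up is the elementary observation that rigidity of $F_{2}(X)$ makes both $[X]^{1}$ and $[X]^{2}$ types, which is what forces the two orbit classes apart.
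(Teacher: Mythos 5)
Your proof is correct and is exactly the paper's argument: the paper derives Proposition \ref{teo7} by combining Corollary \ref{coro4} (for the upper bound) with Lemma \ref{lema6} (rigidity, which separates $[X]^{1}$ from $[X]^{2}$ and gives the lower bound). You have merely spelled out the short deduction that the paper leaves implicit.
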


By the Corollary to Theorem 1 in \cite{8}, if $X$ is a 2-manifold without boundary, then $F_2(X)$ is a 4-manifold without boundary, hence $hd(F_2(X))=1$.

\begin{proposition}\label{teo9}
 Let $n\in \mathbb{N}$. Then $hd(F_{3}(S^{1}))=1$, and if 
$n\neq 3$, then $hd(F_{n}(S^{1}))=n$. 
\end{proposition}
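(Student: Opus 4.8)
The plan is to handle the three ranges of $n$ separately, namely $n=3$, $n\in\{1,2\}$, and $n\geq 4$, since the tools available differ in each case. For $n=3$ we must show $F_{3}(S^{1})$ is homogeneous. Here I would invoke the classical fact (Bott, and later Borsuk--Ulam type computations) that $F_{3}(S^{1})$ is homeomorphic to $S^{3}$; once this identification is in hand, homogeneity is immediate since $S^{3}$ is a manifold without boundary. Alternatively, if one wishes to avoid quoting that identification, one can argue directly that every point of $F_{3}(S^{1})$ has a Euclidean neighborhood (using the analysis of $\mathcal{D}_{n}$ for $1$-manifolds from Corollary~4.4 in \cite{6}, which gives $\mathcal{D}_{3}(S^{1})=F_{3}(S^{1})$ rather than just $[S^{1}]^{3}$ because the low-dimensional ``singular'' strata turn out to be locally Euclidean as well), so $F_{3}(S^{1})$ is a closed $3$-manifold, and then one would still need a connectedness/homogeneity argument. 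The cleanest route is the homeomorphism $F_{3}(S^{1})\cong S^{3}$, so I would cite it and conclude $hd(F_{3}(S^{1}))=1$.

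For $n=1$ we have $F_{1}(S^{1})\cong S^{1}$, which is homogeneous, so $hd(F_{1}(S^{1}))=1=n$. For $n=2$, I would use the known fact that $F_{2}(S^{1})$ is homeomorphic to the M\"obius band; its set of non-manifold points (the boundary circle) is a topological type, the boundary circle is exactly $[S^{1}]^{1}=F_{1}(S^{1})$, and removing it leaves an open M\"obius band which is homogeneous on its own, while the boundary circle is also a single orbit. Hence there are exactly two orbits: $[S^{1}]^{1}$ and $[S^{1}]^{2}$. This gives $hd(F_{2}(S^{1}))=2=n$. (The upper bound $hd(F_{2}(S^{1}))\leq 2$ also follows from Corollary~\ref{coro4} with $X=S^{1}$, $n=2$; for the lower bound one notes that $F_{2}(S^{1})$, being a manifold with nonempty boundary, is not homogeneous.)

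For $n\geq 4$, the argument parallels Proposition~\ref{teo5} but with the $1$-manifold hypothesis. By Lemma~\ref{lema2}(b), each $[S^{1}]^{k}$ with $1\leq k\leq n$ is a type in $F_{n}(S^{1})$, so the sets $[S^{1}]^{1},[S^{1}]^{2},\ldots,[S^{1}]^{n}$ partition $F_{n}(S^{1})$ into $n$ pieces, each of which is invariant under every $h\in\mathcal{H}(F_{n}(S^{1}))$; thus $hd(F_{n}(S^{1}))\geq n$. For the reverse inequality, Lemma~\ref{lema3} (with $X=S^{1}$, a manifold without boundary) shows that for each fixed $k$, any two elements of $[S^{1}]^{k}$ are related by a homeomorphism of $S^{1}$, which induces a homeomorphism of $F_{n}(S^{1})$ carrying one to the other; hence each $[S^{1}]^{k}$ is a single orbit and $hd(F_{n}(S^{1}))\leq n$. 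Combining, $hd(F_{n}(S^{1}))=n$ for $n\geq 4$, and together with the cases $n\in\{1,2,3\}$ this proves the proposition.

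The main obstacle I anticipate is the $n=3$ case: everything else reduces mechanically to Lemmas~\ref{lema2} and~\ref{lema3} plus Corollary~\ref{coro4}, but showing $F_{3}(S^{1})$ is homogeneous genuinely requires the (non-obvious) identification $F_{3}(S^{1})\cong S^{3}$, which is where the real content lies and which is why $n=3$ is the exceptional value in the statement.
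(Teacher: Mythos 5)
Your proposal is correct and follows essentially the same route as the paper: cite Bott's identification $F_{3}(S^{1})\cong S^{3}$ for the exceptional case, the M\"obius band model for $n=2$, the trivial case $n=1$, and Lemmas~\ref{lema2} and~\ref{lema3} for $n\geq 4$. The extra detail you supply for the $n=2$ orbit count and the $n\geq 4$ combination of the two lemmas is exactly what the paper leaves implicit.
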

\begin{proof}
By \cite{1}, $F_{3}(S^{1})$ is homeomorphic to the unit sphere in
the Euclidean space $\mathbb{R}^{4}$. So, $hd(F_{3}(S^{1}))=1$. It is well
known that $F_{2}(S^{1})$ is homeomorphic to the Moebious strip (see section
14 in \cite{5}). Thus, $hd(F_{2}(S^{1}))=2$. Clearly, $hd(F_{1}(S^{1}))=1$.
Finally, if $n\geq 4$, by Lemmas \ref{lema2} and \ref{lema3}, $hd(F_n(S\sp1))=n$. 
\end{proof}

We summarize the results of this section in the following theorem.

\begin{theorem}\label{teo10}
Let $X$ be an $m$-manifold without boundary and $n\in\mathbb{N}$. Then
\begin{itemize}
 \item[(a)] if either $m\geq 2$ and $n\neq 2$ or $m=1$ and $n\neq 3$, then $%
hd(F_{n}(X))=n$,
 \item[(b)] if $m=2$ ($X$ is a surface), then $hd(F_{2}(X))=1$, and
 \item[(c)] if $m=1$ ($X$ is a simple closed curve) and $n=3$, then $hd(F_{2}(X))=1$. 
\end{itemize}
\end{theorem}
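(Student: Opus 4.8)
The plan is to assemble Theorem~\ref{teo10} directly from the propositions and observations already established in this section, treating it as a bookkeeping exercise that partitions the pairs $(m,n)$ into the stated cases. First I would dispense with the trivial case $n=1$: since $F_1(X)$ is homeomorphic to $X$ and a manifold without boundary is homogeneous, $hd(F_1(X))=1=n$, which falls under clause (a) for every $m$ (note $n=1\neq 2$ and $n=1\neq 3$). Next, for $m\geq 2$ I would split on $n$: the case $n\geq 3$ is exactly Proposition~\ref{teo5}, giving $hd(F_n(X))=n$; the case $n=2$ with $m\geq 3$ is Proposition~\ref{teo7}, giving $hd(F_2(X))=2$; and the remaining case $n=2$ with $m=2$ is the cited consequence of Molski's theorem (the Corollary to Theorem~1 in \cite{8}), which says $F_2(X)$ is a $4$-manifold without boundary and hence homogeneous, so $hd(F_2(X))=1$. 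These three sub-cases together cover clauses (a) and (b) for $m\geq 2$.

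Then I would handle $m=1$, where $X$ is a simple closed curve $S^1$ (since we are in the boundaryless setting, the arc is excluded). Here everything is contained in Proposition~\ref{teo9}: $hd(F_n(S^1))=n$ for all $n\neq 3$, which gives clause (a) in the case $m=1,n\neq 3$, and $hd(F_3(S^1))=1$, which gives clause (c). I would remark that clause (c) as printed says ``$n=3$ \dots $hd(F_2(X))=1$,'' which is evidently a typo for $hd(F_3(X))=1$; I would state it correctly in the proof and flag the intended reading so the compiled statement and proof agree.

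Having verified that the cases $\{m\geq 2,\ n\geq 3\}$, $\{m\geq 2,\ n=2,\ m\geq 3\}$, $\{m=2,\ n=2\}$, $\{m=1,\ n\neq 3\}$, $\{m=1,\ n=3\}$, together with $n=1$, exhaust all pairs $(m,n)$ with $m\geq 1$ and $n\in\mathbb{N}$, and that on each the claimed value of $hd(F_n(X))$ has been proved, the theorem follows. I do not anticipate any genuine obstacle here: all the mathematical content has been front-loaded into Lemmas~\ref{lema1}--\ref{lema6} and Propositions~\ref{teo5}, \ref{teo7}, \ref{teo9} (with the Molski consequence supplied inline). The only thing requiring care is the case analysis itself --- in particular making sure the $m=2$ surface case and the $S^1$, $n=3$ case, both of which break the ``$hd=n$'' pattern, are segregated cleanly from clause (a), and making sure the $n=1$ and $n=2$ boundary cases are not silently dropped. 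If I wanted extra robustness I would also double-check that Proposition~\ref{teo5} genuinely requires $m\geq 2$ (it does, via the rigidity input from \cite{3}) so that the $m=1$ and $m=2$ exceptional behaviours are not accidentally contradicted.
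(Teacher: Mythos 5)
Your proposal is correct and takes essentially the same approach as the paper: Theorem~\ref{teo10} is offered there only as a summary with no separate proof, and your assembly from the $n=1$ observation, Proposition~\ref{teo5}, Proposition~\ref{teo7}, the Molski consequence for surfaces, and Proposition~\ref{teo9} is exactly what is intended. Your two editorial observations are also accurate --- clause (c) should read $hd(F_{3}(X))=1$, and the case $m\geq 3$, $n=2$ (where $hd(F_{2}(X))=2$ by Proposition~\ref{teo7}) is in fact not covered by any clause of the theorem as printed, so your case analysis is strictly more complete than the stated result.
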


In the case that $X$ is a manifold with boundary, it seems to be difficult
to give a result so precise as Theorem \ref{teo10}. The following example shows that $%
hd(F_{n}(X))$ could depend not only on $n$ but in the number of components
of the manifold boundary of $X$.

For each $k\in \mathbb{N}$, we can consider a family of disjoint subsets $%
T_{1},T_{2},\ldots ,T_{k}$ of $\mathbb{R}^{3}$, where each $T_{i}$ is a $2$%
-sphere with $i$ handles and if $i\neq j$, then $T_{i}$ is contained in the
unbounded domain of $\mathbb{R}^{3}-T_{j}$. Suppose that $M$ is a closed $3$-ball in $\mathbb{R}^{3}$ containing $T_{1}\cup \ldots \cup T_{k}$ in
its interior. Consider the continuum $X$ which is the closure of the
intermediate region bounded by $M$ and $T_{1}\cup \ldots \cup T_{k}$.
Clearly, $X$ is a manifold with boundary.

By Theorem 17 of \cite{3}, if $n\geq 3$, then $F_{n}(X)$ is rigid. So, if $h\in 
\mathcal{H}(F_{n}(X))$, then $h(F_{1}(X))=F_{1}(X)$. Thus, $%
h|_{F_{1}(X)}:F_{1}(X)\rightarrow F_{1}(X)$ is a homeomorphism. This implies
that $h(T_{i})=T_{i}$. Therefore, for each $n\geq 3$, $hd(F_{n}(X))$ is greater
than $k$.

\section*{The unit interval}

\begin{lemma}\label{lema11}
Let $n\geq 2$ and $A,B\in F_{n}([0,1])$ be such that $%
\left\vert A\right\vert =\left\vert B\right\vert \geq 2$ and $A\cap
\{0,1\}\neq \emptyset \neq B\cap \{0,1\}$. Then there is a homeomorphism $%
h_{0}\in \mathcal{H}(F_{n}([0,1]))$ such that $h(B)=A$. 
\end{lemma}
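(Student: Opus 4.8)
The plan is to reduce, using homeomorphisms of $F_n([0,1])$ induced by homeomorphisms of $[0,1]$, to one normal‑form comparison, and then to produce a single additional homeomorphism of $F_n([0,1])$ — necessarily not induced from $[0,1]$ — linking the two normal forms.

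For the reduction, put $k=|A|=|B|$ and fix $A_0=\{0,\tfrac1{k-1},\dots,\tfrac{k-2}{k-1},1\}$ and $A_1=\{0,\tfrac1k,\dots,\tfrac{k-1}k\}$. If a $k$-point set $C$ has $0,1\in C$, a suitable increasing piecewise‑linear homeomorphism of $[0,1]$ carries $C$ to $A_0$; if $0\in C$ and $1\notin C$, it carries $C$ to $A_1$; and if $1\in C$, $0\notin C$, first apply $t\mapsto 1-t$. Hence every $k$-point set meeting $\{0,1\}$ lies in the orbit of $A_0$ or of $A_1$, and the lemma reduces to finding $h\in\mathcal H(F_n([0,1]))$ with $h(A_1)=A_0$. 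Such an $h$ cannot be induced from $[0,1]$: a homeomorphism of $[0,1]$ fixes the set $\{0,1\}$, hence sends a $k$-point set meeting exactly one of $0,1$ to another such set, so it never carries $A_1$ to $A_0$, i.e.\ $A_0$ and $A_1$ are in different orbits of the induced action. For $n\le 3$ this is already enough: $F_2([0,1])$ and $F_3([0,1])$ are a $2$-cell and a $3$-cell respectively, and in them every $k$-point set meeting $\{0,1\}$ (for $2\le k\le n$) is a boundary point, so $A_0$ and $A_1$ are equivalent.

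Assume now $n\ge 4$, so $h$ must preserve each $[X]^k$ by Lemma \ref{lema2} and all the work is inside $[X]^k$. Write $\mathcal E_k=\{C\in[X]^k:C\cap\{0,1\}\ne\emptyset\}$; it contains $A_0$ and $A_1$, and one checks that it is exactly the manifold boundary of $[X]^k\cong\{0\le x_1<\dots<x_k\le 1\}$ and is a connected $(k-1)$-manifold without boundary, so there is an arc $\gamma\subseteq\mathcal E_k$ from $A_1$ to $A_0$ that stays, except at its endpoint $A_0$, off the corner $\{x_1=0,\ x_k=1\}$. The idea is to let $h$ be the identity outside a small neighbourhood $W$ of $\gamma$ in $F_n([0,1])$ and, on $W$, to slide $A_1$ to $A_0$ along $\gamma$. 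For this one needs a usable local description of $F_n([0,1])$ along $[X]^k$: a point $C\in[X]^k$ close to $\gamma$ has a neighbourhood in $F_n([0,1])$ homeomorphic to a product of a neighbourhood of $C$ in $[X]^k$ with a splitting factor recording how the $k$ points of $C$ may break apart within the budget $n$, and this neighbourhood may be chosen to miss every $[X]^j$ with $j<k$ (equivalently, the part of the frontier of $[X]^k$ where two of the $x_i$ collide). One then takes $h|_W$ to act by a compactly supported homeomorphism of the $[X]^k$-factor that pushes $A_1$ along $\gamma$ onto $A_0$ — chosen with support inside $\mathcal E_k$, hence respecting the boundary — together with the identity on the splitting factor, and extends by the identity elsewhere; since the support is compact in $W$ and $h$ fixes every lower stratum $[X]^j$, $j<k$, pointwise, this yields a genuine homeomorphism of $F_n([0,1])$.

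The main obstacle is precisely this last construction: establishing the local product description of $F_n([0,1])$ along $[X]^k$ and verifying that the slide along $\gamma$ can be realized trivially near the collision frontier while still moving $A_1$ all the way onto the corner point $A_0$. This is cleanest when $k=n$: then $[X]^n$ is the top stratum and is a topological $n$-manifold with boundary near $A_0$ and $A_1$, so an ordinary boundary isotopy, compactly supported away from the collision frontier, does the job and extends by the identity with no splitting factors to worry about. For $2\le k<n$ the splitting directions genuinely have to be transported along, and that bookkeeping is where the care is needed.
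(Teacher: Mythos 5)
Your reduction to two normal forms is exactly the paper's reduction (its cases (a)--(e) are the induced-homeomorphism cases, and its remaining case (f) is precisely your $A_1$-versus-$A_0$ comparison), and your observation that the linking homeomorphism cannot be induced from $[0,1]$ is correct. But the entire content of the lemma lives in that one remaining step, and there your argument has a genuine gap, which you yourself flag: the ``local product description of $F_n([0,1])$ along $[X]^k$'' is not established, and as stated it is not true. A basic neighborhood of a $k$-point set $C$ in $F_n([0,1])$ has the form $\left\langle J_1,\ldots,J_k\right\rangle_n$, which is the set of tuples $(D_1,\ldots,D_k)$ of nonempty finite sets $D_i\subset J_i$ with $\sum_i |D_i|\leq n$; the locus where some $|D_i|\geq 2$ is attached to the stratum $\{|D_i|=1 \ \forall i\}$ as a union of cone-like pieces of varying dimension, not as a constant factor in a product. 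Consequently ``identity on the splitting factor'' is not a well-defined prescription, and extending a compactly supported slide of $[X]^k$ to a homeomorphism of all of $F_n([0,1])$ that agrees with the identity on the frontier of $W$ and on the higher strata is precisely the hard part, not a routine verification. For $2\leq k<n$ your proof therefore does not close.

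The paper avoids this local analysis entirely with a single explicit global homeomorphism: let $\varphi(D)=(\min D,\max D)$, which maps $F_n([0,1])$ onto the triangle $T$ with vertices $(0,0)$, $(0,1)$, $(1,1)$; choose $\sigma\in\mathcal{H}(T)$ fixing the diagonal pointwise with $\sigma(0,1)=(0,a_m)$; and define $h(D)$ by linearly rescaling $D$ from its span $[\min D,\max D]$ onto $[\sigma_1(\varphi(D)),\sigma_2(\varphi(D))]$. This $h$ is checked directly to be a continuous bijection of the compact space $F_n([0,1])$, hence a homeomorphism, and it carries a set touching both endpoints to one touching only $0$, after which the induced cases finish the job. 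It treats all strata $[X]^k$ simultaneously and needs no case split on $n$ (so the separate appeal to $F_2$ and $F_3$ being cells is also unnecessary). If you want to salvage your approach, you would need to either prove the required local structure theorem for $F_n([0,1])$ near $[X]^k$ or replace the local slide by a global construction of this rescaling type.
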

\begin{proof}
Suppose that $A=\{a_{1},\ldots ,a_{m}\}$ and $%
B=\{b_{1},\ldots ,b_{m}\}$, where $\left\vert A\right\vert =\left\vert
B\right\vert =k\geq 2$, $a_{1}<\ldots <a_{m}$ and $b_{1}<\ldots <b_{m}$. In
the cases:

\begin{itemize}
 \item[(a)] $a_{1}=0=b_{1}$ and $a_{m}=1=b_{m}$,
 \item[(b)] $a_{1}=0=b_{1}$ and $\max \{a_{m},b_{m}\}<1$,
 \item[(c)] $0<\min \{a_{1},b_{1}\}$ and $a_{m}=1=b_{m}$,
 \item[(d)] $0=a_{1}$, $a_{m}<1$, $0<b_{1}$ and $b_{m}=1$, and
 \item[(e)] $0<a_{1}$, $a_{m}=1$, $0=b_{1}$ and $b_{m}<1$,
\end{itemize}
it is easy to show that there is $g\in \mathcal{H}([0,1])$ such that $%
g(B)=A$. Then the induced mapping $h_{0}=F_{n}(g):F_{n}([0,1])\rightarrow
F_{n}([0,1])$ satisfies that $h_{0}(B)=A$. The rest of the cases are similar
to the following one.
\begin{itemize}
 \item[(f)]$a_{1}=0$, $a_{m}<1$, $b_{1}=0$ and $b_{m}=1$. 
\end{itemize}
Thus, we only need to show case (f).

In case (f), for each nonempty closed subset $D$ of $[0,1]$, let $m(D)=\min D
$ and $M(D)=\max D$. Consider the mapping $\varphi :F_{n}([0,1])\rightarrow 
\mathbb{R}^{2}$ given by $\varphi (D)=(m(D),M(D))$. Clearly, $\varphi $ is a
mapping whose image is the triangle $T$ in the plane $\mathbb{R}^{2}$ with
vertices $(0,0)$, $(0,1)$ and $(1,1)$. Let $\Delta $ be the convex segment
in $T$ with end points $(0,0)$ and $(1,1)$. Clearly, there exists $\sigma
=(\sigma _{1},\sigma _{2})\in \mathcal{H}(T)$ such that $\sigma |_{\Delta }$
is the identity in $\Delta $ and $\sigma (0,1)=(0,a_{m})$. Notice that if $%
(x,y)\in T$ and $x<y$, then $\sigma _{1}(x)<\sigma _{2}(y)$.

Given two nondegenerate subintervals $J$ and $K$ of $[0,1]$, let $\psi
(J,K):J\rightarrow K$ be the strictly increasing linear homeomorphism, that
is, for each $t\in J$,

$$
\psi (J,K)(t)=\frac{t-m(J)}{M(J)-m(J)}M(K)+\frac{M(J)-t}{M(J)-m(J)}m(K).
$$

Define $h:F_{n}([0,1])\rightarrow F_{n}([0,1])$ by

$$
h(D)=\left\{ 
\begin{array}{lc}
\psi ([m(D),M(D)],[\sigma _{1}(\varphi (D)),\sigma _{2}(\varphi (D))])(D)\textrm{%
,} & \textrm{if }D\notin F_{1}([0,1])\textrm{,} \\ 
D, & \textrm{if }D\in F_{1}([0,1])\textrm{.}%
\end{array}%
\right.
$$

Clearly, $h$ is continuous in the open set $F_{n}([0,1])-F_{1}([0,1])$. To
complete the proof that $h$ is continuous, let $\{D_{k}\}_{k=1}^{\infty }$
be a sequence in $F_{n}([0,1])-F_{1}([0,1])$ converging to an element $\{p\}\in F_{1}([0,1])$. Then $\lim_{k\rightarrow \infty}M(D_{k})=p=\lim_{k\rightarrow \infty }m(D_{k})$, $\lim_{k\rightarrow \infty}\varphi (D_{k})=(p,p)$ and $\lim_{k\rightarrow \infty }\sigma (\varphi(D_{k}))=(p,p)$. Since for each $k\in \mathbb{N}$, $\psi([m(D_{k}),M(D_{k})],[\sigma _{1}(\varphi (D_{k})),\sigma _{2}(\varphi(D_{k}))])(D_{k})$ is a subset of $[\sigma _{1}(\varphi (D_{k})),\sigma_{2}(\varphi (D_{k}))]$ and $\lim_{k\rightarrow \infty }[\sigma _{1}(\varphi(D_{k})),\sigma _{2}(\varphi (D_{k}))]=\{p\}$, we obtain that 
$$
\lim_{k\rightarrow \infty }\psi ([m(D_{k}),M(D_{k})],[\sigma _{1}(\varphi(D_{k})),\sigma _{2}(\varphi (D_{k}))])(D_{k})=\{p\}.
$$
Thus, $\lim_{k\rightarrow \infty }h(D_{k})=h(\{p\})$. Therefore, $h$ is continuous.

In order to show that $h$ is one-to-one, let $E,D\in F_{n}([0,1])$ be such
that $h(D)=h(E)$.

In the case $D\notin F_{1}([0,1])$, $m(D)<M(D)$, so $\varphi (D)\notin
\Delta $ and $\sigma (\varphi (D))\notin \Delta $. So, $\sigma _{1}(\varphi
(D)<\sigma _{2}(\varphi (D))$. Since $\psi ([m(D),M(D)],[\sigma _{1}(\varphi
(D),\sigma _{2}(\varphi (D))])(m(D))=\sigma _{1}(\varphi (D))$ and $\psi
([m(D),M(D)],[\sigma _{1}(\varphi (D)),\sigma _{2}(\varphi (D))])(M(D))=\sigma
_{2}(\varphi (D)$, we obtain that $m(h(D))=\sigma _{1}(\varphi (D))$, $%
M(h(D))=\sigma _{2}(\varphi (D))$. So, $h(D)$ is nondegenerate. Thus, $h(E)$ is
nondegenerate, $m(h(E))=\sigma _{1}(\varphi (D))$, $M(h(E))=\sigma
_{2}(\varphi (D)$ and $E\notin F_{1}([0,1])$. Similarly, $m(h(E))=\sigma
_{1}(\varphi (E))$ and $M(h(E))=\sigma _{2}(\varphi (E))$. Hence, $\sigma
(\varphi (E))=\sigma (\varphi (D))$, $\varphi (E)=\varphi (D)$, $m(E)=m(D)$
and $M(E)=M(D)$. Thus,

$$
\begin{array}{c}
\psi ([m(D),M(D)],[\sigma _{1}(\varphi (D)),\sigma _{2}(\varphi (D))])=\\
\psi ([m(E),M(E)],[\sigma _{1}(\varphi (E),\sigma _{2}(\varphi (E))])
\end{array}
$$

Since this mapping is one-to-one, we conclude that $E=D$.

In the case that $D\in F_{1}([0,1])$, proceeding as before, we obtain that $%
E\in F_{1}([0,1])$. Thus, $D=h(D)=h(E)=E$. Therefore, $h$ is one-to-one.

Now we check that $h$ is onto. Take $E\in F_{n}([0,1])$. If $E\in
F_{1}([0,1])$, then $E=h(E)$ and we are done. So, we suppose that $E\notin
F_{1}([0,1])$. Let $x=m(E)$, $y=M(E)$ and $(a,b)=\sigma ^{-1}(x,y)$. Then $%
x<y$ and $a<b$. Since $\psi ([a,b],[x,y]):[a,b]\rightarrow \lbrack x,y]$ is
onto and $E\subset \lbrack x.y]$, we can define $D=(\psi
([a,b],[x,y]))^{-1}(E)\in F_{n}([0,1])$. Note that $D\notin F_{1}([0,1])$
and $h(D)=E$. Therefore, $h$ is onto.

We have shown that $h$ is a homeomorphism.

Note that $h(B)=\psi ([0,1],[\sigma _{1}(0,1),\sigma _{2}(0,1)])(B)\subset
\lbrack \sigma _{1}(0,1),\sigma _{2}(0,1)]=[0,a_{m}]$ and $%
\{0,a_{m}\}\subset h(B)$. Thus, $A$ and $h(B)$ are as in (b). So, there
exists a homeomorphism $h_{1}\in \mathcal{H}(F_{n}([0,1]))$ such that $%
h_{1}(h(B))=A$. Therefore, $h_{0}=h_{1}\circ h$ is a homeomorphism such that 
$h_{0}(B)=A$. 
\end{proof}

\begin{theorem}\label{teo12}
Let $n\in \mathbb{N}$. Then:
\begin{itemize}
 \item[(a)]if $n\notin \{2,3\}$, then $hd(F_{n}([0,1]))=2n$, and
 \item[(b)]if $n\in \{2,3\}$, then $hd(F_{n}([0,1]))=2$. 
\end{itemize}
\end{theorem}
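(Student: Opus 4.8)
\textbf{Proof proposal for Theorem \ref{teo12}.}

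The plan is to compute $hd(F_n([0,1]))$ by first establishing an upper bound via explicit orbit-identifying homeomorphisms, and then a matching lower bound via a topological invariant that distinguishes the candidate orbits. For the case $n\notin\{2,3\}$ (so $n=1$ or $n\ge 4$; the $n=1$ case is already noted in the introduction, giving $hd(F_1([0,1]))=2$, which equals $2n$), I would argue as follows. For each $k\in\{1,\ldots,n\}$ consider the set $[{[0,1]}]^k$ of $k$-point subsets, and split it into two pieces: those sets $A$ with $A\cap\{0,1\}=\emptyset$ (call this $\mathcal{O}_k^{\mathrm{int}}$) and those with $A\cap\{0,1\}\neq\emptyset$ (call this $\mathcal{O}_k^{\partial}$). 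This gives $2n$ candidate orbits. That each candidate is contained in a single orbit is immediate: for $\mathcal{O}_k^{\mathrm{int}}$ one uses an increasing self-homeomorphism of $[0,1]$ fixing the endpoints and carrying one interior $k$-tuple to another, inducing the corresponding element of $\mathcal{H}(F_n([0,1]))$; for $\mathcal{O}_k^{\partial}$ one invokes Lemma \ref{lema11} directly. So $hd(F_n([0,1]))\le 2n$.

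For the lower bound I would produce, for each candidate orbit, a topological property of a point $A\in F_n([0,1])$ (a property preserved by every homeomorphism of $F_n([0,1])$) that holds exactly on that orbit. The natural invariants are: (i) the dimension of $F_n([0,1])$ near $A$, or more precisely whether $A$ has a Euclidean neighborhood and of what dimension — $A$ has a neighborhood that is a $k$-cell precisely when $|A|=k$, by Corollary 4.4 in \cite{6} (the $\mathcal{D}_n$ computation used in Lemma \ref{lema2}); this separates the $[X]^k$ from one another, i.e.\ it shows any homeomorphism permutes the sets $[{[0,1]}]^k$, and since a homeomorphism of $F_n([0,1])$ restricted appropriately (using the same descending argument as in Lemma \ref{lema2}, together with $h(F_1([0,1]))=F_1([0,1])$ from Corollary 7 in \cite{3} and Lemma \ref{lema1} for the low cases) must fix each $[{[0,1]}]^k$. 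The remaining, and main, point is (ii): to distinguish $\mathcal{O}_k^{\mathrm{int}}$ from $\mathcal{O}_k^{\partial}$ topologically. For this I would examine the local structure of $[{[0,1]}]^k$ inside $F_n([0,1])$ at a point $A$: when $A\subset(0,1)$, a neighborhood of $A$ in $F_n([0,1])$ looks like a neighborhood of a point in $[0,1]^k\subset [0,1]^{k+1}\subset\cdots$ in a "manifold-boundary-free" way locally for the sheet $[X]^k$, whereas when $A$ meets $\{0,1\}$, say $0\in A$, one of the points of $A$ cannot be pushed below $0$, so $A$ lies in the manifold boundary of the $k$-cell neighborhood; equivalently, $A\in\mathcal{O}_k^\partial$ iff $A$ lies in the manifold boundary of $\mathcal{D}_n([0,1])=\bigcup_k [{[0,1]}]^k$ viewed with its cell structure. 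The manifold boundary of a cell is a topological invariant, so this property is preserved by every $h\in\mathcal{H}(F_n([0,1]))$, giving $hd(F_n([0,1]))\ge 2n$ and hence equality.

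For part (b), the case $n\in\{2,3\}$, the situation collapses because $F_2([0,1])$ and $F_3([0,1])$ are themselves cells (it is classical — see e.g.\ \cite{5} — that $F_2([0,1])$ is homeomorphic to a $2$-cell and $F_3([0,1])$ to a $3$-cell), so $hd$ equals the number of orbits of a cell, which is $2$ (the manifold interior and the manifold boundary). I would state this, cite the identifications of $F_2([0,1])$ and $F_3([0,1])$ with $2$- and $3$-cells, and note that a cell $[0,1]^m$ has exactly two orbits under its homeomorphism group: the interior $(0,1)^m$ and the boundary, the manifold boundary being a type. That finishes $hd(F_n([0,1]))=2$ for $n\in\{2,3\}$.

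The main obstacle is step (ii) of the lower bound: making precise and rigorous the claim that membership in $\mathcal{O}_k^{\partial}$ is detected by lying in the manifold boundary of the cellular stratum $[{[0,1]}]^k$. One must be careful that "$[{[0,1]}]^k$ has a natural $k$-cell structure near $A$" and "the manifold boundary of that cell is intrinsically defined" — the cleanest route is to identify, near a point $A$ with $|A|=k$, a neighborhood of $A$ in $[{[0,1]}]^k$ itself (not in all of $F_n$) with an open subset of the ordered configuration model, i.e.\ with $\{(t_1,\dots,t_k): 0\le t_1<\cdots<t_k\le 1\}$, whose manifold boundary consists exactly of the tuples with $t_1=0$ or $t_k=1$; then invoke invariance of the manifold boundary under the self-homeomorphism of $[{[0,1]}]^k$ obtained by restricting $h$ (which we already know preserves each $[{[0,1]}]^k$). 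Everything else — the upper bound, the descending argument to pin down $h([X]^k)=[X]^k$, and part (b) — is routine given the lemmas already proved.
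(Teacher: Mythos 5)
Your proposal is correct and follows essentially the same route as the paper: invariance of each $[{[0,1]}]^{k}$ via Lemma \ref{lema2}, a split of each $[{[0,1]}]^{k}$ into two classes according to whether $A$ lies in the manifold boundary of a $k$-cell neighborhood of $A$ in $[{[0,1]}]^{k}$ (a topologically defined, hence invariant, property), Lemma \ref{lema11} plus induced homeomorphisms for the upper bound, and the identification of $F_{2}([0,1])$ and $F_{3}([0,1])$ with a $2$-cell and a $3$-cell for part (b). The ``main obstacle'' you flag is resolved exactly as you suggest in the paper's proof, which defines the two suborbits of $[{[0,1]}]^{k}$ directly in terms of the manifold boundary of the $k$-cell neighborhood.
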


\begin{proof}
(a) Since $F_{1}([0,1])$ is homeomorphic to $[0,1]$, $%
hd(F_{1}([0,1]))=2$. Let $n\geq 4$, $1\leq k\leq n$ and $h\in \mathcal{H(}%
F_{n}([0,1]))$. By Lemma \ref{lema2}, $h([[0,1]]^{k})=[[0,1]]^{k}$. Note that $%
[[0,1]]^{k}=\mathcal{D}_{1}\cup \mathcal{D}_{2}$, where $\mathcal{D}%
_{1}=\{A\in \lbrack \lbrack 0,1]]^{k}:A$ has a neighborhood $\mathcal{M}$ in 
$[[0,1]]^{k}$ that is a $k$-cell and $A$ is in the manifold boundary of $%
\mathcal{M}\}$ and $\mathcal{D}_{2}=\{A\in \lbrack \lbrack 0,1]]^{k}:A$ has
a neighborhood $\mathcal{M}$ in $[[0,1]]^{k}$ that is a $k$-cell and $A$ is
not in the manifold boundary of $\mathcal{M}\}$. Clearly, $h(\mathcal{D}%
_{1})=\mathcal{D}_{1}$ and $h(\mathcal{D}_{2})=\mathcal{D}_{2}$. By Lemma
\ref{lema11}, $\mathcal{D}_{2}$ is an orbit in $F_{n}([0,1])$. Clearly, $\mathcal{D}%
_{1}$ is an orbit in $F_{n}([0,1])$. Therefore, $F_{n}([0,1])$ contains
exactly $2n$ orbits.

By sections 13 and 14 in \cite{5}, $F_{2}([0,1])$ is a $2$-cell and $F_{3}([0,1])$
is a $3$-cell. This implies (b).
\end{proof}

\bigskip

\section*{Acknowledgements}

The research in this paper was carried out during the 9th Research Workshop
in Hyperspaces and Continuum Theory held in the cities of Toluca and M\'{e}%
xico, during June, 2015. The authors would like to thank Irving Calder\'{o}%
n, Vianey C\'{o}rdoba, Alejandro Illanes, Jorge Mart\'{\i}nez-Montejano,
Yaziel Pacheco, Erick Rodr\'{\i}guez and Claudia Solis who joined the
discussion on degree of homogeneity in symmetric products during the
workshop.

\bigskip

This paper was partially supported by the projects ``Teor\'{\i}a de continuos
e hiperespacios (0221413)'' of Consejo Nacional de Ciencia y Tecnolog\'{\i}a
(CONACyT), 2013 and ``Teor\'{\i}a de Continuos, Hiperespacios y Sistemas Din%
\'{a}micos'' (IN104613) of PAPIIT, DGAPA, UNAM. The research of Hern\'{a}%
ndez-Guti\'{e}rrez was also partially funded by the CONACyT posdoctoral
program, registered with proposal number 234735. He would also like to thank
the University of North Carolina at Charlotte for providing office space.

\bigskip


\begin{thebibliography}{99}

\bibitem{1} R. Boot, \textit{On the third symmetric product of S}$_{1}$, Fund. Math.
39 (1952), 364-368.

\bibitem{2} I. Calder\'{o}n, R. Hern\'{a}ndez-Guti\'{e}rrez and A. Illanes, \textit{%
Rigidity of the second symmetric product of the pseudo-arc}, preprint.

\bibitem{3} R. Hern\'{a}ndez-Guti\'{e}rrez and V. Mart\'{\i}nez-de-la-Vega, \textit{%
Rigidity of symmetric products}, Topology Appl. 160 (2013), 1577-1587.

\bibitem{4} A. Illanes, \textit{Hyperspaces with exactly two or three orbits},
Continuum Theory: in honor of professor David P. Bellamy on the ocassion of
his 60th birthday, Aportaciones Matem\'{a}ticas, Investigaci\'{o}n 19,
Sociedad Matem\'{a}tica Mexicana, M\'{e}xico, 2007, 45-58.

\bibitem{5} A. Illanes, \textit{Models of hyperspaces}, Topology Proc. 41 (2013),
1-26.

\bibitem{6} A. Illanes and E. Casta\~{n}eda, \textit{Finite graphs have unique
symmetric products}, Topology Appl. 153 (2006), 1434-1450.

\bibitem{7} A. Illanes and Sam B. Nadler, Jr., \textit{Hyperspaces, Fundamentals and
Recent Advances}, Monographs and Textbooks in Pure and Applied Math., Vol.
216, Marcel Dekker, Inc., New York, Basel, 1998.

\bibitem{8} R. Molski, \textit{On symmetric products}, Fund. Math. 44 (1957),
165-170.

\bibitem{9} S. B. Nadler, Jr. and P. Pellicer-Covarrubias, \textit{Hyperspaces with
exactly two orbits}, Glasnik Mat. 41 (2006), 141-157.

\bibitem{10} P. Pellicer-Covarrubias, $\frac{1}{2}$-\textit{homogeneity in symmetric
products}, Topology Appl. 155 (2008), 1650-1660.
\end{thebibliography}
\end{document}